\newtheorem{definition}{Definition}[section]
\newtheorem{lemma}{Lemma}[section]
\newtheorem{theorem}{Theorem}[section]
\newtheorem{conjecture}{Conjecture}[section]
\title{On a Conjecture of Randi\'{c} Index and Graph Radius}
\author{\\Hanyuan Deng\footnote{College of Mathematics and Computer Science, Key
Laboratory of High Performance Computing and Stochastic Information
Processing (Ministry of Education of China), Hunan Normal
University, Changsha, Hunan 410081, China. Email: {\tt hydeng@hunnu.edu.cn}.} \and \\Zikai Tang\footnotemark[1] \and \\Jie Zhang\footnote{Department of Computer Science, Aarhus University, Denmark. Email: {\tt csjiezhang@gmail.com}.} \thanks{The author acknowledges support from the Danish National Research Foundation and the National Science Foundation of China (grant 61061130540) for the Sino-Danish Center for the Theory of Interactive Computation, and from the Center for research in Foundations of Electronic Markets (CFEM), supported
by the Danish Council for Strategic Research.}}
\date{}
\begin{document}

\maketitle

\begin{abstract}
The {\em Randi\'{c} index} $R(G)$ of a graph  $G$ is defined
as the sum of $(d_i d_j)^{-\frac{1}{2}}$ over all edges $v_i v_j$ of $G$, where $d_i$ is the degree of the vertex $v_i$ in $G$. The {\em radius} $r(G)$ of a graph $G$ is the minimum graph eccentricity of any graph vertex in $G$. \cite{fa} conjectures $R(G) \ge r(G)-1$ for all connected graph $G$. A stronger version, $R(G) \ge r(G)$, is conjectured by \cite{ch} for all connected graphs except even paths. 
In this paper, we make use of {\em Harmonic index} $H(G)$, which is defined as the sum of $\frac{2}{d_i+d_j}$ over all edges $v_i v_j$ of $G$, to show that $R(G) \ge r(G)-\frac{31}{105}(k-1)$ for any graph with cyclomatic number $k\ge 1$, and $R(T)> r(T)+\frac{1}{15}$ for any tree except even paths. These results improve and strengthen the known results on these conjectures.
\end{abstract}

\setlength{\baselineskip}{.45cm}

\section{Introduction}

Topological indices are numerical parameters of a graph which characterize the topological structure of the graph and are usually graph invariants. The {\em Randi\'{c} index}, one of the most well-known topological indices, is introduced by~\cite{Randic} and is generalized by~\cite{B.E}. It studies the branching property of graphs. Since its appearance, tremendously attention has been focused on the upper and lower bounds of the index. ~\cite{B.E} prove that the  Randi\'{c} index of a graph of order $n$ without isolated vertices is at least $\sqrt{n-1}$; they leave the open problem that the minimum value of the  Randi\'{c} index for graphs $G$ with given minimum degree $\delta(G)$.~\cite{DFR} answer this question for $\delta(G)=2$, thus partially solve the problem. Furthermore, they prove a best possible lower bound on the Randi\'{c} index of a triangle-free graph $G$ with given minimum degree $\delta(G)$.~\cite{BBG} build up a technique to determine the maximal  Randi\'{c}  index of a tree with a specified number of vertices and leaves. Reviews of mathematical properties of the Randi\'{c}  index refer to~\cite{GutmanFurtula}, \cite{LiGutman}.

On the other side, \cite{fa} and \cite{ch} conjecture that the  Randi\'{c}  index can be lower bounded in terms of the graph radius. In this paper, we improve and strengthen the known results on these conjectures by studying the relationship between the Harmonic index and graph radius.

 The Harmonic index is defined by \cite{b5}. \cite{b6} consider the relationship between the Harmonic index and graph eigenvalues. \cite{b12} find the minimum and maximum values
of the Harmonic index for simple connected graphs and trees, and
characterize the corresponding extremal graphs. \cite{d} consider the relationship between the Harmonic index $H(G)$ and the chromatic number $\chi(G)$ and prove that $\chi(G)\leq 2H(G)$. It strengthens a conjecture of the Randi\'{c} index and the chromatic number which is based on the system AutoGraphiX and is proved by \cite{b7}.  \cite{dtw} give a best possible lower bound for the Harmonic index of a graph and a triangle-free graph with minimum degree no less than two and characterize the extremal graphs, respectively. 

{\em Organization.} In section 2, we introduce neccessary notations used in this paper, and state our main results. In section 3, 4 and 5, we prove the main results. We conclude our work in Section 6.

\section{Preliminary}

Let $G$ be a simple undirected graph with vertex set
$V(G)=\{v_1,\dots,v_n\}$ and edge set $E(G)=\{e_1,\dots,e_m\}$.
Let's denote edge $v_i v_j \in E$ if $v_i$ and $v_j$ are adjacent in
graph $G$. Let $d_i$ be the degree of vertex $v_i$, $i=1,\dots,n$.
Unless otherwise specified we focus on non-empty connected graph
throughout the paper.  A pendant vertex is a vertex of degree one. A
path with even (odd) vertices is called an even (odd) path. A cycle
with even (odd) vertices is called an even (odd) cycle. The neighborhood $N(v_i)$ is the set of vertices adjacent to $v_i$.  The distance $\rho(v_i, v_j)$ is the number of edges in a shortest path
connecting $v_i$ and $v_j$ in $G$. The {\em radius} of a graph $G$ is the minimum eccentricity of any vertex; that is, $r(G)=\min\limits_{v_i\in V}\max\limits_{v_j\in V}\rho(v_i,v_j)$. The {\em cyclomatic number} $k$ of a graph $G$, also known as the {\em circuit rank}, is the minimum number of edges to remove from the graph to make it cycle-free; that is, $k=|E|-|V|+1$. Obviously, the cyclomatic number of unicyclic, bicyclic and tricyclic graphs are 1, 2 and 3, resectively. 

The celebrated {\em  Randi\'{c} index} of graph $G$ is
introduced by~\cite{Randic}.

\begin{definition}
Given any graph $G$, the Randi\'{c} index of $G$ is
\begin{align*}
R(G)=\sum_{v_i v_j \in E} \frac{1}{\sqrt{d_id_j}},
\end{align*}
where the sum is over all edges $v_iv_j$ of the graph $G$.
\end{definition}

The following interesting conjecture is proposed by \cite{fa}.

\begin{conjecture}[\cite{fa}]
For any connected graph $G$,
$R(G)\geq r(G)-1$.
\end{conjecture}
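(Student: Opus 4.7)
My plan is to go through the Harmonic index $H(G)$ as an intermediate quantity. Since $\sqrt{d_id_j}\le (d_i+d_j)/2$ by AM-GM, every edge $v_iv_j$ satisfies $(d_id_j)^{-1/2}\ge 2/(d_i+d_j)$, so summing gives $R(G)\ge H(G)$. It therefore suffices to prove the stronger-looking inequality $H(G)\ge r(G)-1$. This reduction is worthwhile because the additive denominator $d_i+d_j$ is much better suited to local charging arguments than the multiplicative $\sqrt{d_id_j}$.

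Next I would fix a center $u_0$ of $G$, a vertex $u_r$ with $\rho(u_0,u_r)=r=r(G)$, and a geodesic $u_0u_1\cdots u_r$, and try to distribute the terms of $H(G)$ across the $r$ edges of this path so that each path edge collects total weight at least $1$, with a single unit of slack absorbed at the two endpoints. The natural scheme assigns to the edge $u_iu_{i+1}$ its own term $2/(d_{u_i}+d_{u_{i+1}})$ together with a half-share of each side-edge $u_ix$, $x\notin\{u_{i-1},u_{i+1}\}$, contributing $1/(d_{u_i}+d_x)$. Because every $d_x\ge 1$, verifying the resulting per-vertex inequality reduces to a convex minimization in the neighbor-degrees, whose extremum should occur when all non-path neighbors are pendants.

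The hard part will be the bad configuration in which the geodesic traverses a sequence of very high-degree vertices whose side-neighbors are all pendants: a naive charging then fails by a small additive amount that accumulates over the path. To recover the deficit I would try to exploit either the global identity $\sum_v d_v = 2|V|-2+2k$, which injects extra $H(G)$-mass whenever $k\ge 1$, or, when $r(G)$ is small, the competing lower bound $R(G)\ge \sqrt{n-1}$ of~\cite{B.E}, which must dominate once the number of pendants becomes large. Since the paper's actual theorem carries a $\tfrac{31}{105}(k-1)$ penalty, I expect the genuine obstacle to the full conjecture to be constructing a charging scheme that is uniform in $k$; handling the tree case first (where the abstract's bound $R(T)>r(T)+1/15$ applies) and then attempting to reduce the cyclic case to it edge-by-edge, while controlling the $H(G)$-loss per removed edge, seems a plausible route.
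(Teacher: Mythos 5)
This statement is \emph{Conjecture}~1 of the paper, not a theorem: the paper itself does not prove it for general connected graphs, but only establishes it for trees, unicyclic graphs, and (via $H(G)\ge r(G)-\tfrac{31}{105}(k-1)$) graphs of cyclomatic number $k\le 4$. So any complete blind proof would be a genuinely new result, and yours is not one --- it is a plan whose decisive steps are left open. Your opening reduction $R(G)\ge H(G)$ is exactly the paper's starting point, and note that even the intermediate target $H(G)\ge r(G)-1$ is not known in general (the paper's own Conjecture~3, $H(G)\ge r(G)$ except for even paths, remains open); the paper's edge-deletion argument, which is precisely the ``reduce the cyclic case to the tree/unicyclic case edge-by-edge'' route you describe at the end, loses $\tfrac{31}{105}$ per deleted cycle edge and therefore cannot reach $r(G)-1$ once $k\ge 5$.

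Beyond that, the specific charging scheme you propose fails already on the easiest instance. Take $G=P_n$: the geodesic $u_0u_1\cdots u_r$ from a center has no side-edges at its internal vertices, so each internal path edge collects only its own term $\tfrac{2}{2+2}=\tfrac12$, far short of the required $1$; the mass that actually makes $H(P_n)=r-\tfrac16$ work sits on the \emph{other} half of the path, which your scheme never collects because it only charges edges incident to the chosen geodesic. Any viable layering argument would have to charge each of the $r$ distance-classes of the BFS from $u_0$ with the full Harmonic mass of that layer, not just the edges touching one geodesic. Your two proposed rescue mechanisms also do not close the gap: the bound $R(G)\ge\sqrt{n-1}$ dominates $r(G)-1$ only when $r(G)=O(\sqrt{n})$, whereas the bad configurations (a long spine whose vertices each carry a bounded number of pendants) have $r$ linear in $n$; and the identity $\sum_v d_v=2(n-1)+2k$ by itself gives no per-edge lower bound on $\tfrac{2}{d_i+d_j}$. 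In short, the proposal reproduces the paper's (partial) strategy without its quantitative lemmas and does not supply the new idea that would be needed to settle the conjecture.
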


\cite{ch} prove that $R(T)\geq r(T)+\sqrt{2}-\frac{3}{2}$ for any tree $T$, and $R(T)\geq r(T)$ for any tree $T$ except even paths. \cite{lg}, and \cite{yl} prove that the conjecture is true for unicyclic, bicyclic and tricyclic graphs. \cite{ch} also propose the following stronger version of the conjecture.

\begin{conjecture}[\cite{ch}] 
For any connected graph $G$ except
even paths, $R(G)\geq r(G)$.
\end{conjecture}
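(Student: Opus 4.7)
The plan is to use the Harmonic index $H(G)=\sum_{v_iv_j\in E}\frac{2}{d_i+d_j}$ as a bridge between the Randi\'c index and the radius. The key elementary inequality is AM--GM: for every edge $v_iv_j$,
\[
\sqrt{d_id_j}\le \frac{d_i+d_j}{2},\qquad\text{so}\qquad \frac{1}{\sqrt{d_id_j}}\ge \frac{2}{d_i+d_j},
\]
which gives $R(G)\ge H(G)$ termwise. It therefore suffices to lower bound $H(G)$ in terms of $r(G)$. The advantage of $H$ over $R$ is that $2/(d_i+d_j)$ is a rational function of small integer degrees, which behaves much more transparently than $1/\sqrt{d_id_j}$ under local moves such as pendant deletions or edge removals.

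For trees I would argue by induction on the number of vertices, aiming to prove $H(T)\ge r(T)+\tfrac{1}{15}$ whenever $T$ is not an even path. Fix a diametral path $P=u_0u_1\cdots u_d$; the endpoint $u_0$ is pendant, and $r(T)=\lceil d/2\rceil$. I would split into two reduction moves depending on $d_{u_1}$: delete $u_0$ alone when $d_{u_1}=2$, and prune $u_0$ together with the other pendant neighbours of $u_1$ when $d_{u_1}\ge 3$. In each case the radius either stays the same or drops by one, while the change in $H$ is expressible as a small explicit rational function of $d_{u_1}$ and the degree of the unique non-pendant neighbour of $u_1$. Careful bookkeeping should show that the harmonic surplus is never erased by a reduction, and that the only trees for which the surplus degenerates to zero are precisely the even paths, which must appear as a clean base case.

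For graphs with cyclomatic number $k\ge 1$, I would fix a spanning tree $T\subseteq G$ and reinstate the $k$ extra edges one at a time. Inserting an edge $v_iv_j$ strictly decreases several existing harmonic contributions (the degrees of $v_i$ and $v_j$ each grow by one) but contributes a new term $2/(d_i+d_j)$, and can only decrease the radius. Optimising the worst-case net loss per added edge over small degree configurations would yield an inequality of the form $H(G)\ge H(T)-\tfrac{31}{105}(k-1)$, where the denominator $105$ strongly suggests that the extremum is attained at a specific pattern such as vertices of degrees $3,\,5,\,7$ incident to the reinstated edge. Combined with the tree bound this gives $R(G)\ge r(G)-\tfrac{31}{105}(k-1)$; in particular $k=1$ recovers Conjecture~2.2 for unicyclic graphs.

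The main obstacle, I expect, is pinning down the exact constants. Both $\tfrac{1}{15}$ and $\tfrac{31}{105}$ are evidently extremal values of small combinatorial optimisations over choices like $d_{u_1}\in\{2,3,4\}$ and its neighbour degrees, and the case analysis has to exhaust every configuration near the diametral-path endpoints without leakage. A secondary difficulty is keeping the excluded class (even paths) closed under the reduction operation used in the induction, so that the surplus $\tfrac{1}{15}$ is never fed back into itself as a base case.
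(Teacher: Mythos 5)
The statement you were given is a conjecture of Caporossi and Hansen, and the paper does not prove it: it only establishes partial cases (trees other than even paths, unicyclic graphs, and the weaker bound $H(G)\ge r(G)-\frac{31}{105}(k-1)$ for cyclomatic number $k$), and it even closes by posing the analogous statement for $H$ as a further conjecture. Your proposal correctly identifies the paper's central device, $R(G)\ge H(G)$, and sketches the same two partial results, but it does not close the conjecture either, and one of its internal steps fails. Specifically, the claim that the case $k=1$ of the edge-reinsertion bound ``recovers'' the conjecture for unicyclic graphs is not sound. Reinserting one edge into a spanning tree $T$ can cost up to $\frac{31}{105}$ in harmonic index (the extremal configuration in the paper's edge-deletion lemma has both endpoints of degree $5$, not degrees $3,5,7$), while the tree surplus is only $\frac{1}{15}=\frac{7}{105}$; the generic argument therefore yields at best $H(G)\ge r(G)-\frac{24}{105}$ for unicyclic $G$. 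Worse, the spanning tree may be an even path (for instance when $G$ is an even cycle), in which case there is no surplus but a deficit of $\frac{1}{6}$. This is exactly why the paper proves the unicyclic case by a separate, delicate case analysis on how a longest path of a spanning tree meets the cycle, and why its general bound carries the factor $(k-1)$ rather than $k$: it deletes only $k-1$ cycle edges, stopping at a spanning \emph{unicyclic} subgraph and invoking the unicyclic theorem there rather than descending to a tree.

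A smaller divergence: for trees the paper does not induct by pruning leaves from a diametral endpoint. It works bottom-up, starting from the diametral path $P_k$ with its exact values $H(P_k)=r(P_k)+\frac{1}{3}$ or $r(P_k)-\frac{1}{6}$, adding one carefully placed pendant edge in the even case to create a surplus of at least $\frac{7}{30}$, and then growing the rest of the tree using the lemma that adding a pendant edge strictly increases $H$ while the radius stays equal to that of $P_k$. Your top-down pruning would have to track simultaneous changes in $r$ and $H$ at every deletion and keep even paths from re-entering as intermediate cases; the paper's construction avoids that bookkeeping entirely. In summary, your strategy matches the paper's in spirit, but as written it neither proves the stated conjecture (which remains open, in this paper and elsewhere) nor correctly derives the unicyclic case from the cyclomatic-number bound.
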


We confirm that the conjectures are true for some graphs by studying the relationship between Harmonic index and graph radius. The {\em Harmonic index} is defined by  \cite{b5} as follows.

\begin{definition}
Given any graph $G$, the Harmonic index of $G$ is
\begin{align*}
H(G)=\sum_{v_i v_j \in E} \frac{2}{d_i+d_j},
\end{align*}
where the sum is over all edges $v_iv_j$ of the graph $G$.
\end{definition}

For any path $P_n$ with $n\geq 3$ vertices, it is easy to check that
$H(P_n)=\frac{n}{2}-\frac{1}{6}$ and $r(P_n)=\lfloor\frac{n}{2}\rfloor$. Therefore,
\begin{align}\label{HarmonicPath}
H(P_n)=
\left\{
\begin{array}{ll}
r(P_n)-\frac{1}{6},  \,\,\,\,\,\ & \hbox{if $n$ is even;} \\
r(P_n)+\frac{1}{3}, \,\,\,\,\,\ & \hbox{if $n$ is odd.}
\end{array}
\right.
\end{align}

Since $\sqrt{xy} \le \frac{x+y}{2}, \forall x,y\in \mathbb{R}^{+}$, we
obtain $R(G)\ge H(G)$ for any graph $G$. 

Our main results have the following three aspects, which improve and strengthen the known results on Conjectures 1 and 2.

\begin{enumerate}
\item 
For all trees $T$ except even paths, $H(T) > r(T)+\frac{1}{15} > r(T)$. 

We thus partially confirm Conjecture 2 and improve the result of \cite{ch} for trees.

\item
For all unicyclic graphs $G$, $H(G)\geq r(G)$. The equality holds if and
only if $G$ is an even cycle.

We thus confirm the Conjecture 2 for unicyclic graphs.

\item
For all graphs $G$ with cyclomatic number $k\geq 1$,
$H(G) \ge r(G) - \frac{31}{105}(k-1)$. In particular, $H(G)>r(G)-1$ for all graphs with cyclomatic number no more than 4.

We thus confirm that Conjecture 1 is true not only for trees, unicyclic, bicyclic and tricyclic graphs, but also for graphs have cyclomatic number 4. In addition, this result implies the inequality in Conjecture 1 strictly holds for all graphs with cyclomatic number no more than 4. 
\end{enumerate}

\section{The Harmonic index and the radius of a tree}

We first show that adding a pendant edge to a graph $G$ strictly
increases its Harmonic index.

\begin{lemma}
If $G_0$ is obtained by adding a pendant edge $ v_i v_{n+1}$ to a graph $G$,
where $v_i \in V(G)$, then $H(G_0)>H(G)$.
\end{lemma}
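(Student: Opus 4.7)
My plan is a direct edge-by-edge computation of $H(G_0) - H(G)$. Adding a pendant edge at $v_i$ has only local effect: the new edge $v_iv_{n+1}$ contributes a positive term $\frac{2}{(d_i+1)+1}$; every existing edge $v_iv_{j_l}$ incident to $v_i$ sees its denominator grow by exactly $1$ (because only $v_i$'s degree changes, from $d_i$ to $d_i+1$, while $d_{j_l}$ is the same in $G$ and $G_0$); and all other edges are unaffected. So $H(G_0)-H(G)$ collapses to a single gain from the new edge minus the sum of the $d_i$ decreases, each of the form $\frac{2}{(d_i+d_{j_l})(d_i+1+d_{j_l})}$.

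The remaining step is to show the gain dominates the total loss. Each individual decrease is at most $\frac{2}{(d_i+1)(d_i+2)}$, using only the trivial observation that every neighbor of $v_i$ satisfies $d_{j_l}\ge 1$. Summing the $d_i$ such bounds and comparing against the gain $\frac{2}{d_i+2}$ leaves a strictly positive residual of order $\frac{2}{(d_i+1)(d_i+2)}$, so $H(G_0)>H(G)$. The degenerate case $d_i=0$ is immediate since the loss sum is then empty and the gain alone remains.

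I do not anticipate a serious obstacle. The only point requiring care is the bookkeeping: tracking that in the transition $G\to G_0$ only $v_i$'s degree shifts (and by exactly $+1$), so that each incident edge's denominator grows by $+1$ rather than $+2$. With that in place, the bound $d_{j_l}\ge 1$ produces the termwise comparison above in one line, and the lemma follows without any case analysis beyond the empty-neighborhood edge case.
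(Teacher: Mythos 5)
Your proposal is correct and follows essentially the same route as the paper: isolate the gain $\frac{2}{d_i+2}$ from the new edge, bound each of the $d_i$ losses on the old incident edges by the worst case $d_{j}=1$, and observe that the residual $\frac{2}{(d_i+1)(d_i+2)}$ is strictly positive. (Your residual is in fact the correct value; the paper's final line states $\frac{2d_i}{(d_i+1)(d_i+2)}$, a harmless arithmetic slip.)
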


\begin{proof}
Note that $\frac{1}{x+1}-\frac{1}{x}$ is
increasing in $x>1$. According to the definition of the Harmonic index, we
have
\begin{align*}
H(G_0) - H(G)=&\frac{2}{d_i+1+d_{n+1}}+\sum\limits_{v_j\in
N(v_i)\setminus\{v_{n+1}\}} \left( \frac{2}{d_i+1+d_j}-\frac{2}{d_i+d_j} \right)\\
\geq& \frac{2}{d_i+2}+d_i \left( \frac{2}{d_i+2}-\frac{2}{d_i+1} \right)\\
=& \frac{2d_i}{(d_i+1)(d_i+2)}>0.
\end{align*}
\end{proof}

\begin{theorem}\label{tree}
For all trees $T$ except even paths, $H(T) > r(T)+\frac{1}{15}$.
\end{theorem}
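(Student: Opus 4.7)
Let $P=v_0v_1\cdots v_d$ be a diametral path of $T$, so $d=\mathrm{diam}(T)$ and, for trees, $r(T)=\lceil d/2\rceil$. The endpoints $v_0,v_d$ must be leaves of $T$ (any extra neighbor of $v_0$ would force a path of length $d+1$, violating $\mathrm{diam}(T)=d$), so every off-path vertex of $T$ lies in a subtree rooted at some interior $v_i$ with $1\le i\le d-1$. I can therefore build $T$ from $P_{d+1}$ by a sequence of pendant-edge attachments (ordering the off-path vertices so that within the subtree rooted at each $v_i$ parents come before children), and Lemma~3.1 then gives $H(T)>H(P_{d+1})$. The plan is to combine this qualitative growth with formula~(\ref{HarmonicPath}), splitting on the parity of $d$.

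If $d$ is even, (\ref{HarmonicPath}) yields $H(P_{d+1})=r(P_{d+1})+1/3=r(T)+1/3$, so either $T=P_{d+1}$ is an odd path or Lemma~3.1 bumps $H$ up further; either way $H(T)>r(T)+1/3>r(T)+1/15$. The hard case is $d$ odd. Then $H(P_{d+1})=r(T)-1/6$ and $T\ne P_{d+1}$ (which would be an excluded even path), but the mere Lemma~3.1 bound $H(T)>r(T)-1/6$ falls short of the target by $7/30$.

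To close the $7/30$ gap I would order the attachments so that the very first off-path vertex $w$ is pendant to some interior $v_i$ of $P_{d+1}$, at which moment $v_i$ still has degree $2$. With $a,b\in\{1,2\}$ denoting the degrees of $v_{i-1},v_{i+1}$ in $P_{d+1}$, a direct computation of the resulting increment gives
\begin{align*}
\Delta H=\frac{1}{2}+\frac{2}{3+a}-\frac{2}{2+a}+\frac{2}{3+b}-\frac{2}{2+b},
\end{align*}
which evaluates to $7/30$ when $\{a,b\}=\{1,2\}$ (i.e.\ $v_i\in\{v_1,v_{d-1}\}$), $9/30$ when $a=b=2$, and $5/30$ when $a=b=1$. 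Since in this case $d\ge 3$, the last possibility (which forces $d=2$) cannot occur, so $\Delta H\ge 7/30$. All subsequent pendant attachments contribute strictly positively by Lemma~3.1, and therefore $H(T)\ge H(P_{d+1})+7/30=r(T)+1/15$.

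The main obstacle I foresee is the strictness claimed in the statement. The bound $\Delta H=7/30$ is tight precisely when $v_i\in\{v_1,v_{d-1}\}$ and no second pendant is added; small examples like $P_4$ with one extra pendant attached at $v_1$ actually realise $H(T)=r(T)+1/15$ on the nose, so upgrading to a strict inequality requires showing that any further off-path vertex (or any attachment at a strictly interior $v_i$) boosts the gain above $7/30$. I would handle this final step by a short case analysis around the two or three extremal trees that saturate the first-step gain, using Lemma~3.1 once more to convert ``$\ge$'' into ``$>$'' as soon as the tree has more than one off-path vertex; this strictness accounting is the most delicate part of the argument.
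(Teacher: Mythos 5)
Your proposal follows the same route as the paper's proof: build $T$ up from a diametral path $P_{d+1}$ by pendant attachments, invoke Lemma 3.1 for monotonicity, dispose of the case $d$ even via $H(P_{d+1})=r(T)+\frac{1}{3}$, and in the case $d$ odd compute the increment of the \emph{first} off-path attachment exactly to close the $\frac{7}{30}$ gap. Your increment values ($\frac{7}{30}$ when the first pendant is attached next to an endpoint of the path, $\frac{3}{10}$ otherwise) coincide with the paper's, and your elimination of the $a=b=1$ case is correct.

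The obstacle you flag at the end is genuine, and you should trust your counterexample rather than try to engineer around it. For $T$ equal to $P_{2m}$ with a single pendant attached at the second vertex one gets
\begin{align*}
H(T)=H(P_{2m})+\tfrac{7}{30}=m-\tfrac{1}{6}+\tfrac{7}{30}=m+\tfrac{1}{15}=r(T)+\tfrac{1}{15}
\end{align*}
exactly (for $m=2$ this is the ``chair'': $\frac12+\frac25+\frac23+\frac12=\frac{31}{15}=2+\frac{1}{15}$), so there is an infinite family of equality cases and the strict inequality in the statement cannot be proved by any case analysis. The paper's own argument contains precisely the hole you anticipated: it writes $H(T)>H(T_0)$, where $T_0$ is $P_k$ plus one pendant edge, but this is an equality whenever $T=T_0$. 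Your write-up, which stops at ``$\ge$'' and explicitly flags the strictness as unresolved, is therefore more careful than the published proof; the correct conclusion is $H(T)\ge r(T)+\frac{1}{15}$, with equality exactly for the trees above, and the theorem as stated needs its inequality weakened (or those trees excluded).
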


\begin{proof} 
Without loss of generality, assume the diameter of the tree $T$ is $k-1$, and $P_k$ is the longest shortest path of $T$. So $r(T)=r(P_k)$. There are two cases to consider.

\begin{enumerate}
\item
if $k$ is odd, according to~\eqref{HarmonicPath}, $H(P_k)=r(P_k)+\frac{1}{3}$. In addition, the tree $T$ can be derived from path $P_k$ by adding pendent edges step by step. According to Lemma 1, 
\begin{align*}
H(T) > H(P_k) = r(P_k)+\frac{1}{3} = r(T)+\frac{1}{3} > r(T)+\frac{1}{15}.
\end{align*}

\item
if $k$ is even, according to~\eqref{HarmonicPath}, $H(P_k)=r(P_k)-\frac{1}{6}$. Let the tree $T_{0}$ be a subgraph of $T$, and is obtained by adding one pendent edge to $P_k$ but retaining its diameter; that is, the newly added pendent edge is not incident to the pendent vertices of $P_k$.
\begin{itemize}
\item
If the newly added pendent edge is adjacent to the pendent edges of $P_k$, then by simple calculation we get 
\begin{align*}
H(T_{0}) = H(P_k) + \frac{7}{30}.
\end{align*}

\item
If the newly added pendent edge is not adjacent to the pendent edges of $P_k$, then by simple calculation we get 
\begin{align*}
H(T_{0}) =H(P_k) + \frac{3}{10}.
\end{align*}
\end{itemize}

In all, $H(T_{0})\ge H(P_k)+\frac{7}{30}$. By the same argument in case 1, we derive the tree $T$ from $T_{0}$ by adding pendent edges step by step and get 
\begin{align*}
H(T) > H(T_{0}) \ge H(P_k)+\frac{7}{30} = r(P_k) - \frac{1}{6} + \frac{7}{30} = r(T) + \frac{1}{15}.
\end{align*}
\end{enumerate}
\end{proof}

\section{The Harmonic index and the radius of a unicyclic graph}

In this section, we discuss the Harmonic index and the radius of
unicyclic graphs.

\begin{theorem}\label{unicyclic}
For all unicyclic graphs $G$, $H(G)\geq r(G)$. The equality holds if and
only if $G$ is an even cycle.
\end{theorem}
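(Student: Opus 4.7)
My plan is to split the argument based on whether $G$ is a cycle. When $G = C_\ell$, a direct computation gives $H(C_\ell) = \ell \cdot \frac{2}{4} = \frac{\ell}{2}$ and $r(C_\ell) = \lfloor \ell/2 \rfloor$, so the inequality is equality for even $\ell$ and strict with gap $\frac{1}{2}$ for odd $\ell$. When $G$ is not a cycle, then $G$ has a pendant vertex, and my aim is the strict inequality $H(G) > r(G)$.

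In the non-cycle case, my strategy is to remove a carefully chosen cycle edge $e = v_i v_{i+1}$ to produce a spanning tree $T := G - e$, and then apply Theorem~\ref{tree} to $T$. Two useful facts are: (i) removing an edge cannot decrease any pairwise distance, so $r(T) \geq r(G)$; and (ii) if $T$ is not an even path, then $H(T) > r(T) + \frac{1}{15}$ by Theorem~\ref{tree}. If one can choose $e$ so that $T$ is not an even path and the difference $\Delta := H(G) - H(T) \geq -\frac{1}{15}$, then $H(G) = H(T) + \Delta > r(T) + \frac{1}{15} + \Delta \geq r(G)$, as desired. My preferred choice is a cycle edge $e$ with both endpoints of $G$-degree $2$, i.e., two adjacent ``bare'' cycle vertices: writing out the degree-change contributions, the difference simplifies to $\Delta = \frac{1}{2} - \frac{2}{(1+a)(2+a)} - \frac{2}{(1+b)(2+b)} \geq \frac{1}{6}$, where $a, b \geq 2$ are the degrees of the other cycle-neighbors of $v_i$ and $v_{i+1}$. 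Because $G$ has a pendant, the resulting $T$ carries at least three leaves (the original pendant together with $v_i$ and $v_{i+1}$, which become degree $1$ in $T$), so $T$ is not a path.

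The main obstacle is handling the two exceptional configurations where this default choice fails. (a) When every cycle edge is incident to at least one branch vertex of $G$, so that no two adjacent cycle vertices both have degree $2$ (e.g., $C_3$ with pendants at every cycle vertex), the default $e$ does not exist and I expect to fall back on direct computation, using that the many attached pendants contribute terms $\frac{2}{d+1}$ that collectively dominate the necessarily small radius; for instance, for $G = C_3$ with $k \geq 1$ pendants at each vertex one verifies $H(G) = \frac{3}{k+2} + \frac{6k}{k+3} > 2 = r(G)$ for all $k \geq 1$. (b) When $G = C_\ell + (\text{one pendant})$ with $\ell$ odd, removing a cycle edge incident to the pendant's attachment forces $T = P_{\ell+1}$, an even path, so Theorem~\ref{tree} is inapplicable; here I would instead remove a cycle edge not incident to the attachment, which yields a $T$ with three leaves (not a path) and $\Delta \geq \frac{7}{30}$ by a short calculation. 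The anticipated hard part is (a), where no uniform argument seems to cover every bushy configuration and the bound must be obtained ad hoc.
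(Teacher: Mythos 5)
Your cycle case and your ``generic'' non-cycle case are correct: if some cycle edge $e=v_iv_{i+1}$ has both endpoints of $G$-degree $2$, then $T=G-e$ gains $v_i,v_{i+1}$ as leaves in addition to a pendant vertex that $G$ must already possess (a connected unicyclic graph with minimum degree $2$ is a cycle), so $T$ is not a path, Theorem~\ref{tree} applies, and your computation $\Delta=\frac{1}{2}-\frac{2}{(1+a)(2+a)}-\frac{2}{(1+b)(2+b)}\ge\frac{1}{6}$ for $a,b\ge 2$, combined with $r(T)\ge r(G)$, gives $H(G)>r(G)$. This is a genuinely different and, where it works, cleaner route than the paper's, which starts from a longest path of a spanning tree and rebuilds $G$ edge by edge while tracking the change in $H$ through an extensive case analysis.

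The gap is your case (a), and it is not a boundary nuisance but the heart of the remaining difficulty. The family of unicyclic graphs in which no cycle edge has both endpoints of degree $2$ is infinite and has unbounded radius, so ``fall back on direct computation'' is not a proof, and your one verified family ($C_3$ with $k$ pendants per vertex) does not represent it. Concretely, take the sunlet graph: $C_\ell$ with one pendant attached to every cycle vertex. Every cycle vertex has degree $3$, so case (a) applies, and removing any cycle edge gives
\begin{align*}
\Delta=\tfrac{1}{3}-2\left(\tfrac{1}{15}+\tfrac{1}{6}\right)=-\tfrac{2}{15}<-\tfrac{1}{15},
\end{align*}
so the slack $\frac{1}{15}$ supplied by Theorem~\ref{tree} is exhausted and your inequality chain breaks, even though $H(G)=\frac{5\ell}{6}\ge\lfloor\ell/2\rfloor+1=r(G)$ does hold. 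Since the radius here grows linearly in $\ell$, no finite computation covers this family, let alone all of case (a) (arbitrary trees may hang off the branch vertices). Closing the gap needs a new ingredient --- either a tree bound sharper than $r(T)+\frac{1}{15}$ that improves with the number of leaves, or an argument like the paper's, which analyses how the cycle sits relative to a longest path of the spanning tree rather than deleting a single edge and invoking the tree theorem as a black box. As written, the proof is incomplete.
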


\begin{proof}
Let $C=u_1 u_2 \cdots u_l u_1$ be the unique cycle of
$G$, where $l\ge 3$, and $|V(G)|=n$. If $G=C$ is a cycle, then $H(G)=\frac{n}{2}$,
$r(G)=\lfloor\frac{n}{2}\rfloor$. So $H(G)\geq r(G)$ and the equality holds
if and only if $n$ is even. In the following we assume $V(G-C) \ne \emptyset$. Then $T=G-u_i u_{i+1}$ is a spanning tree of $G$ for any edge $u_i u_{i+1}$ of $C$, and $r(T)\geq r(G)$. We study the following cases.

{\bf Case 1.}
Any longest path of $T$
contains all $l-1$ edges of $C-u_i u_{i+1}$, $\forall u_i u_{i+1} \in E(C)$. 

Note that in this case the degree of any vertex of $C$ is at least three. Otherwise there must exit an edge in $C$ such that one of its vertices has degree 2, and other has degree greater than 2. Without loss of generality, assume $u_1 u_2$ is such an edge, where $d_1=2,d_2\ge 3$. Then there must exist a vertex $w$ adjacent to $u_2$ other than $u_1$ and $u_3$. For $C-u_1 u_l$, any longest path of $T$ should contain $u_1 u_2 \cdots u_l$. However,  since the length of $w u_2 u_3 \cdots u_l$ is the same as the length of $u_1 u_2 u_3 \cdots u_l$, there must exist a longest path contains $w u_2 u_3 \cdots u_l$ which doesn't include edge $u_1u_2$. Thus a contradiction occurs.

Without loss of generality, in the following we assume the edge deleted from $C$ is $u_1 u_l$, i.e., $T=G-u_1 u_l$. Let $P=v_1v_2\cdots v_t$ denote a longest path of $T$, where $3 \le l < t$. So $r(T)=r(P)$ and $P$ contains all $l-1$ edges of $C-u_{1}u_{l}$. Note that it is impossible for $l=t$, i.e., $u_l \not= v_t$. For contradiction, suppose $u_l=v_t$. So $v_t$ is a vertex in $C$ with degree no less than 3. Then besides $u_1 v_t$ and $v_{t-1}v_t$, there should be another edge connecting $v_t$. It implies that $P$ is not the longest path; the longest path could be one more edge longer than $P$. For the same reason, it is impossible for $u_1=v_1$. In another words, the path $u_1 u_2 \cdots u_l$ is neither on the leftmost, nor on the rightmost of the longest path $P$. 

Now let's  add pendant edges $u_2u'_2, u_3u'_3, \cdots,
u_{l-1}u'_{l-1}$ to  $P$. Denote $T_1=P+u_2u'_2$, then
\begin{align*}
H(T_1)=H(P)+\frac{3}{10}.
\end{align*}

Let $T_2=T_1+u_3u'_3+\cdots+u_{l-1}u'_{l-1}$. By applying Lemma 1 iteratively, we obtain $H(T_2) > H(T_1)$. Now, let's denote $G'=T_2+u_1u_l$. By calculation, we get
\begin{align*}
H(G')=H(T_2)+
\left\{
\begin{array}{ll}
-\frac{2}{15},
& \mbox{if $u_1=v_2$ and $u_l=v_{t-1}$}; \\
-\frac{1}{15},
& \mbox{if $u_1=v_2$ and $u_l=v_{t-2}$}, \\
& \mbox{or $u_1=v_3$ and $u_l=v_{t-1}$};\\
0,
& \mbox{otherwise}.
\end{array}
\right.
\end{align*}

In all, we have 
\begin{align*}
H(G') \ge H(T_2)-\frac{2}{15}>H(T_1)-\frac{2}{15} =H(P)+\frac{3}{10} -\frac{2}{15}= H(P)+\frac{1}{6}.
\end{align*} 

Finally, we add all the residual edges $E(G\setminus G')$ to $G'$, step by step. Note that all of these edges are pendant edge. According to Lemma 1, we have $H(G)> H(G')$. Hence, $H(G) > H(G') > H(P)+\frac{1}{6}$. Since $P$ is a path, according to~\eqref{HarmonicPath}, $H(P)\ge r(P)-\frac{1}{6}$. Therefore, 
\begin{align*}
H(G)>H(P)+\frac{1}{6} \ge r(P)-\frac{1}{6}+\frac{1}{6}=r(P)=r(T)\ge r(G).
\end{align*}

{\bf Case 2.}
There is an edge $u_i u_{i+1}$ of $C$ such that $T=G-u_i u_{i+1}$ has a
longest path $P=v_1v_2\cdots v_t$ which contains at most $l-2$ edges
of $C-u_i u_{i+1}$. 

Obviously, $r(T)=r(P)$. There are three subcases to consider.

(i) $P$ and $C$ have no common vertex.

Let $P_0=w_1w_2\cdots w_p$ be the shortest path connecting $P$ and $C$,
where $w_1=v_i$ is a vertex on $P$ and $w_p$ is a vertex on $C$,
where $2\leq i\leq t-1$ since $P$ is a longest path of $T$.
Without loss of generality, we assume $w_p=u_1$. Let
$T_1=P+w_1w_2$, then $H(T_1)\geq H(P)+\frac{1}{6}$. Let
$T_2=T_1+w_2w_3+\cdots+w_{p-1}w_p$, then $H(T_2)> H(T_1)$ by
Lemma 1. Let $T_3=T_2+u_1u_2$, then $H(T_3)\geq H(T_2)+\frac{1}{2}$.
Let $T_4=T_3+u_1u_2+\cdots+u_{l-1}u_l$, then $H(T_4) > H(T_3)$ by
Lemma 1. Now, let $G'=T_4+u_lu_1$, then $H(G')\geq
H(T_4)+\frac{1}{30}$. So, $H(G')>H(P)+\frac{1}{6}$. Finally, let's add all the residual edges $E(G \setminus G')$ to $G'$ step by step. According to Lemma 1, we have $H(G) > H(G')$. Hence, 
\begin{align*}
H(G)>H(P)+\frac{1}{6} \geq r(P) = r(T) \geq r(G).
\end{align*}

(ii) $P$ and $C$ have exactly one common vertex.

Without loss of generality, we assume that $v_i=u_1$ is the unique
common vertex of $P$ and $C$, where $2\le i \le t-1$ since $P$ is a
longest path of $T$. Let $T_1=P+u_1u_2$, then $H(T_1)\geq
H(P)+\frac{1}{6}$. Let $T_2=T_1+u_2u_3$, then $H(T_2)\geq
H(T_1)+\frac{17}{30}\geq H(P)+\frac{22}{30}$. Let
$T_3=T_2+u_3u_4+\cdots+u_{l-1}u_l$, then $H(T_3)> H(T_2)$ by
Lemma 1. Now, let $G'=T_3+u_lu_1$, then $H(G')\geq
H(T_3)-\frac{1}{10}$. So, $H(G')> H(P)+\frac{19}{30}$. Finally,
let's add all the residual edges $E(G \setminus G')$ to $G'$ step by step. According to Lemma 1, we have $H(G) > H(G')$. Therefore, 
\begin{align*}
H(G)> H(P)+\frac{19}{30}>r(P)=r(T)\geq r(G).
\end{align*}

(iii) $P=v_1v_2\cdots v_t$ contains $s-1$ edges of $C$, where
$2\leq s\leq l-1$.

Without loss of generality, we assume that $u_1u_2\cdots
u_s=v_iv_{i+1}\cdots v_{i+s-1}$ for some $1\leq i<i+s-1\leq t$,
i.e., $P$ contains the edges $u_ju_{j+1}$ ($1\leq j\leq s-1$) of
$C$. Then $u_1\neq v_1$ or $u_s\neq v_t$ since $P$ is a longest
path of $T$. We assume that $u_1\neq v_1$. Let $T_1=P+u_su_{s+1}$,
then $H(T_1)\geq H(P)+\frac{7}{30}$. Let
$T_2=T_1+u_{s+1}u_{s+2}+\cdots+u_{l-1}u_l$, then $H(T_2)> H(T_1)$
by Lemma 1. Now, let $G'=T_2+u_1u_l$, then $H(G')\geq
H(T_2)-\frac{1}{30}$ in all cases. So, $H(G')>
H(P)+\frac{7}{30}-\frac{1}{30}=H(P)+\frac{1}{5}>r(P)$.
Finally, let's add all the residual edges $E(G\setminus G')$ to $G'$ step by step. According to Lemma 1, we have $H(G)> H(G')$. Hence, 
\begin{align*}
H(G)>r(P)=r(T)\geq r(G).
\end{align*}

\end{proof}

\section{The Harmonic index and the radius of a graph with cyclomatic number $k$}

We will need the following lemmas to prove our main result in this section.

\begin{lemma}
Let $f(x,y)=\frac{4}{x}-\frac{8}{x+1}+\frac{2}{x+2}+\frac{4}{y}
-\frac{8}{y+1}+\frac{2}{y+2}+\frac{2}{x+y}$,  $x,y \in \mathbb{N}^{+} \setminus \{1\}$, then $f(x,y) \ge -\frac{31}{105}$.
\end{lemma}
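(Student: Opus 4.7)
I decompose $f(x,y) = g(x) + g(y) + \frac{2}{x+y}$, where $g(t) := \frac{4}{t} - \frac{8}{t+1} + \frac{2}{t+2} = -\frac{2(t^2+t-4)}{t(t+1)(t+2)}$. Since $g(5) = -\frac{26}{105}$, we have $f(5,5) = 2g(5) + \frac{1}{5} = -\frac{52}{105} + \frac{21}{105} = -\frac{31}{105}$, so the lemma amounts to showing that $(5,5)$ is a global minimizer of $f$ over $(x,y) \in \mathbb{Z}_{\geq 2}^2$. My plan rests on two explicit factorizations of $g(t)$ plus a constant.

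The first identity is
\[
g(t) + \frac{4}{15} \;=\; \frac{2(t-3)(t-4)(2t+5)}{15\,t(t+1)(t+2)} \;\geq\; 0 \qquad (t \in \mathbb{Z},\ t \geq 2),
\]
with equality iff $t \in \{3,4\}$. Applying this bound in both variables gives $f(x,y) \geq -\frac{8}{15} + \frac{2}{x+y}$, which already exceeds $-\frac{31}{105}$ whenever $x+y \leq 8$.

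For $x+y \geq 9$, I split into three sub-cases. If $\min(x,y) = 2$, combining $g(y) + \frac{2}{y+2}$ over the common denominator $y(y+1)(y+2)$ collapses to the identity $f(2,y) = -\frac{1}{6} + \frac{8}{y(y+1)(y+2)} > -\frac{1}{6} > -\frac{31}{105}$. If $\min(x,y) \geq 5$, the parallel factorization
\[
g(t) + \frac{26}{105} \;=\; \frac{2(t-5)(13t^2 - t - 84)}{105\,t(t+1)(t+2)} \;\geq\; 0 \qquad (t \geq 5)
\]
(valid because $13t^2 - t - 84 > 0$ already at $t = 3$) gives $f(x,y) \geq -\frac{52}{105} + \frac{2}{x+y}$, settling $x+y \leq 10$ with equality exactly at $(5,5)$. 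The intermediate case $\min(x,y) \in \{3,4\}$, $\max(x,y) \geq 5$ combines the value $g(\min) = -\frac{4}{15}$ with the sharper bound $g(\max) \geq -\frac{26}{105}$, closing the remaining gap via $\frac{2}{x+y}$.

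The main obstacle is the tail $x+y \geq 11$, where the simple bounds above leave a deficit of the form $\frac{2}{x+y} - \frac{1}{5} < 0$. The resolution is to iterate: the first factorization reveals that $g$ is monotonically increasing on $\{5,6,7,\ldots\}$ (easily checked by comparing successive integer values), so $g(t) \geq g(T)$ holds for every $t \geq T$. Choosing $T = 12$ yields $2g(12) = -\frac{76}{273} > -\frac{31}{105}$, which gives a uniform bound $f(x,y) > -\frac{31}{105}$ for all $x, y \geq 12$. The remaining pairs with $\max(x,y) \leq 11$ form a finite grid that is then verified by direct arithmetic.
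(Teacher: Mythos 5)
Your algebra is sound as far as it goes: I checked the factorizations $g(t)+\frac{4}{15}=\frac{2(t-3)(t-4)(2t+5)}{15\,t(t+1)(t+2)}$ and $g(t)+\frac{26}{105}=\frac{2(t-5)(13t^2-t-84)}{105\,t(t+1)(t+2)}$, the identity $f(2,y)=-\frac{1}{6}+\frac{8}{y(y+1)(y+2)}$, and the monotonicity of $g$ on integers $\ge 4$ (indeed $g(t+1)-g(t)=\frac{2(t+4)(t-3)}{t(t+1)(t+2)(t+3)}$), and they all hold. The problem is the final reduction: the complement of the set $\{x\ge 12 \mbox{ and } y\ge 12\}$ is $\{\min(x,y)\le 11\}$, not $\{\max(x,y)\le 11\}$, so the ``remaining pairs'' are not a finite grid. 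The infinite strip $3\le\min(x,y)\le 11$, $\max(x,y)\ge 12$ is covered by none of your cases: it has $x+y\ge 15$, the $\min=2$ identity does not apply, the $\min\ge 5$ bound only settles $x+y\le 10$, and the $\min\in\{3,4\}$ bound only settles $x+y\le 9$. Moreover the gap cannot be closed by merely combining your stated estimates: for $(4,100)$, say, they yield only $f\ge g(4)+g(12)+\frac{2}{104}=-\frac{4}{15}-\frac{38}{273}+\frac{1}{52}\approx-0.387$, well below $-\frac{31}{105}\approx-0.295$. What is true, and what a repair needs, is that on this strip $g(y)+\frac{2}{x+y}$ is only $O(y^{-2})$ below zero (e.g.\ $g(y)+\frac{2}{y+3}=\frac{-2(y^2-3y-12)}{y(y+1)(y+2)(y+3)}$), so $f(x,y)$ sits within a vanishing error of $g(x)\ge-\frac{4}{15}=-\frac{28}{105}>-\frac{31}{105}$; you must add such an estimate, made quantitative for each $\min\in\{3,\dots,11\}$ (or uniformly), before the finite check closes the proof.

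As a point of comparison, your route is genuinely different from the paper's, which establishes real-variable monotonicity of $f$ in each coordinate on $[5,\infty)^2$ via partial derivatives (reducing to the sign of a quintic) and then evaluates the ten pairs $2\le x\le y\le 5$; your integer factorizations avoid calculus entirely and, once the missing strip is patched, would give a cleaner and fully elementary argument.
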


\begin{proof}
We first show that $f(x,y) \ge f(5,5)= -\frac{31}{105}$, when $x,y \in \mathbb{R}$ and $x\ge 5, y\ge 5$. Since
\begin{align*}
\frac{\partial f(x,y)}{\partial x} =& -\frac{4}{x^2} + \frac{8}{(x+1)^2} - \frac{2}{(x+2)^2} - \frac{2}{(x+y)^2}, \\
\frac{\partial^2 f}{\partial x \partial y} =&  \frac{4}{(x+y)^3} > 0,
\end{align*}
we know that $\frac{\partial f}{\partial x}$ is increasing in $y$. So, 
\begin{align*}
\frac{\partial f(x,y)}{\partial x} \ge \frac{\partial f(x,5)}{\partial x} =& -\frac{4}{x^2} + \frac{8}{(x+1)^2} - \frac{2}{(x+2)^2} - \frac{2}{(x+5)^2} \\
=& \frac{2(6 x^5 + 21x^4 - 96x^3 -527x^2 - 680x-200)}{x^2 (x+1)^2 (x+2)^2 (x+5)^2}.
\end{align*}

Denote $g(x)=6 x^5 + 21x^4 - 96x^3 -527x^2 - 680x-200$. It is easy to check that $g(x)>0$ for $x \ge 5$. Hence, $\frac{\partial f(x,y)}{\partial x} \ge \frac{\partial f(x,5)}{\partial x} >0$ for $x\ge 5$, which implies $f(x,y)$ is increasing in $x\ge 5$. 

Similarly, $f(x,y)$ is increasing in $y\ge 5$. Hence, $f(x,y)\ge f(5,5)=-\frac{31}{105}$.

Second, we compare the values of $f(x,y)$ at several discrete points, namely $f(2,2)=\frac{1}{6}, f(2,3)=-\frac{1}{30}, f(2,4)=-\frac{1}{10}, f(2,5)=-\frac{9}{70}, f(3,3)=-\frac{1}{5}, f(3,4)=-\frac{26}{105}, f(3,5)=-\frac{37}{140}, f(4,4)=-\frac{17}{60}, f(4,5)=-\frac{92}{315}, f(5,5)=-\frac{31}{105}$. Obviously, the minimum is $f(5,5)=-\frac{31}{105}$.

In all, $f(x,y) \ge -\frac{31}{105}$ when $x,y \in \mathbb{N}^{+} \setminus \{1\}$.
\end{proof}

\begin{lemma}
Let $G$ be a graph with cyclomatic number $k\geq 1$, and $v_i v_{i+1}$ is an edge in a cycle of $G$, then $H(G) \ge H(G - v_i v_{i+1})-\frac{31}{105}$.
\end{lemma}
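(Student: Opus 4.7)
The plan is to express $H(G) - H(G - v_iv_{i+1})$ as a sum of local contributions at $v_i$ and $v_{i+1}$, show it is bounded below by $f(d_i, d_{i+1})$, and then invoke Lemma 3.

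Setting $G' = G - v_iv_{i+1}$, the deletion only lowers $d_i$ and $d_{i+1}$ by $1$ each, so a direct expansion of the harmonic index gives
\begin{align*}
H(G) - H(G') = \frac{2}{d_i + d_{i+1}} + \sum_{v_j \in N(v_i) \setminus \{v_{i+1}\}} \left(\frac{2}{d_i + d_j} - \frac{2}{d_i - 1 + d_j}\right) + \sum_{v_j \in N(v_{i+1}) \setminus \{v_i\}} \left(\frac{2}{d_{i+1} + d_j} - \frac{2}{d_{i+1} - 1 + d_j}\right).
\end{align*}
Each bracketed term equals $-\frac{2}{(d_i + d_j)(d_i - 1 + d_j)}$, a negative quantity whose magnitude decreases as $d_j$ grows; hence each term is at least $-\frac{2}{d_i(d_i+1)}$ when $d_j \geq 1$ and at least $-\frac{2}{(d_i+1)(d_i+2)}$ when $d_j \geq 2$.

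The structural key is that since $v_iv_{i+1}$ lies on a cycle of length at least $3$, $v_i$ has another cycle-neighbor distinct from $v_{i+1}$, and that neighbor has degree at least $2$. I would therefore apply the sharper $d_j \geq 2$ bound to this single summand and the weaker $d_j \geq 1$ bound to the remaining $d_i - 2$ summands. A routine partial-fraction simplification then shows that the resulting lower bound equals $\frac{4}{d_i} - \frac{8}{d_i + 1} + \frac{2}{d_i + 2}$. Treating $v_{i+1}$ analogously and combining with the edge term $\frac{2}{d_i + d_{i+1}}$ yields precisely $f(d_i, d_{i+1})$. Since $d_i, d_{i+1} \geq 2$, Lemma 3 gives $f(d_i, d_{i+1}) \geq -\frac{31}{105}$, as desired.

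The only place to be careful is the boundary case $d_i = 2$ (or $d_{i+1} = 2$), where the factor $d_i - 2$ vanishes. This causes no trouble: the sole remaining summand at $v_i$ is then precisely the other cycle-neighbor (of degree $\geq 2$), so the estimate $-\frac{2}{(d_i+1)(d_i+2)}$ already matches the expression $\frac{4}{d_i} - \frac{8}{d_i+1} + \frac{2}{d_i+2}$ at $d_i = 2$. Everything else is bookkeeping, and the bulk of the hidden work has been absorbed into Lemma 3.
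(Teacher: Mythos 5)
Your proposal is correct and follows essentially the same route as the paper: expand the difference $H(G)-H(G-v_iv_{i+1})$ edge by edge, use the cycle structure to guarantee one neighbor of each endpoint with degree at least $2$, bound that summand and the remaining $d_i-2$ summands separately to obtain $\frac{4}{d_i}-\frac{8}{d_i+1}+\frac{2}{d_i+2}$ at each endpoint, and reduce to the lemma on $f(x,y)\ge-\frac{31}{105}$ (Lemma 2 in the paper's numbering). Your explicit treatment of the boundary case $d_i=2$ is a welcome detail the paper leaves implicit.
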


\begin{proof} 
According to the definition of the Harmonic index, 
\begin{align*}
H(G)-H(G - v_i v_{i+1})=&\frac{2}{d_i+d_{i+1}}+\sum\limits_{v_j \in
N(v_i)\setminus\{v_{i+1}\}} \left( \frac{2}{d_i+d_j}-\frac{2}{d_i+d_j-1} \right) \\
&+ \sum\limits_{v_j\in
N(v_{i+1})\setminus\{v_i\}} \left( \frac{2}{d_{i+1}+d_j}-\frac{2}{d_{i+1}+d_j-1} \right).
\end{align*}

Since $v_i v_{i+1}$ is an edge in a cycle, there is a vertex $v_{i-1}$ adjacent to $v_i$ in the cycle with degree $d_{i-1}\geq 2$, and  a vertex $v_{i+2}$ adjacent to $v_{i+1}$ in the cycle with degree $d_{i+2} \geq 2$. So,
\begin{align*}
H(G) - H(G-v_i v_{i+1})\geq&
\frac{2}{d_i+d_{i+1}} + \left( \frac{2}{d_i+2}-\frac{2}{d_{i}+1} \right) + \left( d_i-2 \right) \left( \frac{2}{d_i+1}-\frac{2}{d_i} \right) \\
&+ \left( \frac{2}{d_{i+1}+2} - \frac{2}{d_{i+1}+1} \right) + \left(d_{i+1}-2 \right) \left( \frac{2}{d_{i+1}+1}-\frac{2}{d_{i+1}} \right)\\
=&\frac{4}{d_i}-\frac{8}{d_i+1}+\frac{2}{d_i+2}+\frac{4}{d_{i+1}}-\frac{8}{d_{i+1}+1}+\frac{2}{d_{i+1}+2}+\frac{2}{d_i+d_{i+1}} \\
\ge & -\frac{31}{105}  \,\,\,\,\,\,\,\,(\mbox{Lemma 2}).
\end{align*}
\end{proof}

\begin{theorem}
Let $G$ be a graph with cyclomatic number $k\geq 1$. Then
$H(G) \ge r(G) - \frac{31}{105}(k-1)$. In particular, $H(G)>r(G)-1$ for a graph with cyclomatic number no more than 4.
\end{theorem}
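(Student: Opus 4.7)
The plan is a straightforward induction on the cyclomatic number $k$, using Theorem~\ref{unicyclic} as the base case and Lemma~4 as the edge-removal step.

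For the base case $k=1$, Theorem~\ref{unicyclic} gives $H(G)\ge r(G) = r(G)-\frac{31}{105}(1-1)$. For the inductive step, suppose the bound holds for all graphs of cyclomatic number $k-1\ge 1$, and let $G$ be a connected graph with cyclomatic number $k\ge 2$. Since $G$ contains a cycle, pick any edge $e=v_i v_{i+1}$ lying on some cycle of $G$. Removing $e$ from a cycle cannot disconnect $G$ (the rest of the cycle still connects the two endpoints), and $G-e$ has cyclomatic number $k-1\ge 1$, so the inductive hypothesis applies:
\begin{equation*}
H(G-e)\ge r(G-e)-\tfrac{31}{105}(k-2).
\end{equation*}
Lemma~4 supplies $H(G)\ge H(G-e)-\tfrac{31}{105}$. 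Moreover, deleting an edge can only increase pairwise distances, so the eccentricity of every vertex in $G-e$ is at least its eccentricity in $G$; taking the minimum over vertices gives $r(G-e)\ge r(G)$. Chaining these three inequalities yields $H(G)\ge r(G)-\frac{31}{105}(k-1)$, completing the induction.

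For the ``in particular'' claim it is enough to verify that $\frac{31}{105}(k-1)<1$ whenever $k\le 4$: indeed $\frac{31}{105}\cdot 3=\frac{93}{105}<1$, and the strict inequality $H(G)>r(G)-1$ then follows from the main bound. Nothing in this final step is itself a serious obstacle; the substantive work has already been absorbed into the supporting results, namely the unicyclic base case (Theorem~\ref{unicyclic}) and the analytic estimate of Lemma~2 that underlies the per-edge loss of $\frac{31}{105}$ in Lemma~4.
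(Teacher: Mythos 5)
Your proof is correct and is essentially the paper's argument: the paper simply unrolls your induction into an explicit chain $H(G)\ge H(G_1)-\frac{31}{105}\ge\cdots\ge H(G_{k-1})-\frac{31}{105}(k-1)$, removing cycle edges one at a time until a spanning unicyclic subgraph remains, then invokes Theorem~\ref{unicyclic} and the monotonicity $r(G)\le r(G_{k-1})$ exactly as you do. The only cosmetic discrepancy is that the edge-removal estimate is Lemma~3 in the paper, not Lemma~4.
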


\begin{proof} 
We first prove the case $k\ge 2$. Since the cyclomatic number of graph $G$ is $k\ge 2$, there exists a sequense of edges $e_1,\dots,e_{k}$ such that the cyclomatic number of graph $G_i=G-\{e_1,e_2,\cdots,e_{i}\}$ is $k-i$, where $1\le i \le k$. In particular, $G_{k-1}$ is a spanning unicylic subgraph of $G$. Note that $r(G)\leq r(G_1)\leq r(G_2)\leq\cdots\leq r(G_{k-1})$. According to Lemma 3 and Theorem 2, we have
\begin{align*}
H(G) \ge & H(G_1)-\frac{31}{105} \ge H(G_2)-\frac{31}{105}-\frac{31}{105} \ge \cdots \ge H(G_{k-1})-\frac{31}{105}(k-1)\\
\geq&r(G_{k-1})-\frac{31}{105}(k-1)\geq r(G)-\frac{31}{105}(k-1).
\end{align*}

Note that this is in accord with Theorem 2. That is, when $G$ is unicyclic graph, i.e., $k=1$, $H(G) \ge r(G)$. Therefore, the theorem is true for $k\ge 1$.

In particular, $H(G) \ge r(G)-1$ for $1\le k \le 4$.
\end{proof}

\section{Conclusion}

We focus on the conjectures of Randi\'{c} index and graph radius. These conjectures have been opened for a long time. We improve and strengthen the known results on the conjectures by studing the relationship between the Harmonic index and graph radius. It is interesting to know whether or not the conjectures are true for more general graphs. In particular, could the techniques in this paper be extended to studying more general graphs? It is intriguing to know whether the following conjecture is true.

\begin{conjecture}
For all connected graphs $G$ except even paths, $H(G)\geq r(G)$.
\end{conjecture}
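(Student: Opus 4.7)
The plan is to proceed by strong induction on the cyclomatic number $k$ of $G$. The base cases $k = 0$ and $k = 1$ are already handled: for $k = 0$, if $G$ is a tree other than an even path, Theorem~\ref{tree} gives $H(G) > r(G) + \frac{1}{15}$, and even paths are excluded by hypothesis; for $k = 1$, Theorem~\ref{unicyclic} gives $H(G) \geq r(G)$.

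For the inductive step with $k \geq 2$, pick any edge $e$ lying on a cycle of $G$ and set $G' = G - e$. Then $G'$ is connected with cyclomatic number $k - 1 \geq 1$, and cannot be an even path because it still contains a cycle, so $H(G') \geq r(G')$ by the induction hypothesis. Writing the telescoping identity
\begin{align*}
H(G) - r(G) = \bigl[H(G') - r(G')\bigr] + \bigl[H(G) - H(G')\bigr] + \bigl[r(G') - r(G)\bigr],
\end{align*}
the first bracket is nonnegative by induction and the third is nonnegative since deleting an edge cannot decrease the radius. The task reduces to choosing $e$ so that the middle term $H(G) - H(G')$, which Lemma~3 only bounds by $-\frac{31}{105}$, is absorbed by the positive contributions of the other two brackets.

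The main technical step would be a refinement of Lemma~3. The discrete calculation in the proof of Lemma~2 shows that the value $-\frac{31}{105}$ is attained only when $(d_i, d_{i+1}) = (5, 5)$; at all other integer pairs the loss is strictly smaller, the next-worst being $f(4, 5) = -\frac{92}{315}$. Moreover, achieving the tight value also forces every non-$e$ neighbor of $v_i$ and $v_{i+1}$ to have degree exactly $2$, a very rigid local pattern. I would use this rigidity to argue that in any $G$ with $k \geq 2$ one of the following always holds for some well-chosen cycle edge $e$: (a) the local degree pattern at $e$ is non-tight, yielding a loss strictly smaller than $\frac{31}{105}$ that is covered by a sharpened version of Theorem~\ref{unicyclic} tracking how much $H(G') - r(G')$ exceeds zero in each subcase of its proof; (b) removing $e$ strictly increases the radius, in which case $r(G') - r(G) \geq 1 > \frac{31}{105}$ since the radius is integer-valued; or (c) the eventual unicyclic descendant in the deletion sequence is not an even cycle, so by a quantitative strengthening of Theorem~\ref{unicyclic} accumulated slack at least $\frac{31}{105}$ is available to absorb the single worst deletion step.

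The main obstacle, and the reason the conjecture remains open, is pathological graphs in which \emph{every} cycle edge sits in the tight configuration, no cycle-edge deletion raises the radius, and the final unicyclic subgraph reached by a deletion sequence is an even cycle. Ruling such graphs out requires a global argument rather than the purely local inductive one above. A promising direction is to introduce a potential $\Phi(G) = H(G) - r(G) + \alpha\, N_5(G)$, where $N_5(G)$ counts cycle vertices of degree $5$ whose off-cycle neighbors all have degree $2$, and to show that some cycle-edge deletion always strictly decreases $\Phi$ without it dropping below zero; calibrating $\alpha$ so that the even-cycle boundary is matched exactly, and checking that the iterated deletion argument terminates with a nonnegative value, is the decisive difficulty.
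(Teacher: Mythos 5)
This statement is not proved anywhere in the paper: it is Conjecture 3, posed in the Conclusion precisely because the authors' methods do not reach it, and your proposal --- by your own admission in its final paragraph --- is also not a proof but a program whose decisive step is left open. Concretely, the induction on the cyclomatic number loses up to $\frac{31}{105}$ at every cycle-edge deletion (Lemma 3), and neither the paper nor your sketch supplies a compensating surplus. Your case (a) and case (c) both require a quantitative strengthening of Theorem~\ref{unicyclic} (``tracking how much $H(G')-r(G')$ exceeds zero'') that is never stated or proved; note that this surplus is exactly $0$ when the unicyclic graph is an even cycle, so no uniform positive slack exists to be harvested. Your case (b) is a hypothesis, not something you show can be arranged. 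The assertion that for every $G$ with $k\ge 2$ one of (a), (b), (c) holds for some well-chosen cycle edge is essentially the conjecture in disguise, asserted rather than argued, and the potential-function idea with $\Phi = H - r + \alpha N_5$ is explicitly labeled ``a promising direction,'' i.e., not carried out. The strongest rigorous statement obtainable from the paper's machinery is Theorem 3, $H(G)\ge r(G)-\frac{31}{105}(k-1)$, which gives the conjectured inequality only for $k\le 1$.

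There is also a factual error in the rigidity analysis on which your cases rest. Equality in Lemma 3 does not force the off-cycle neighbors of $v_i$ and $v_{i+1}$ to have degree $2$: the proof of Lemma 3 bounds the contribution of the two cycle neighbors $v_{i-1},v_{i+2}$ using degree $\ge 2$, and the contribution of the remaining $d_i-2$ (resp.\ $d_{i+1}-2$) neighbors using degree $\ge 1$. Tightness therefore requires $d_i=d_{i+1}=5$, the cycle neighbors of degree exactly $2$, and the six remaining neighbors to be \emph{pendant} vertices of degree $1$, not degree $2$; so the counter $N_5(G)$ as you define it does not track the tight configurations. More importantly, even with the correct rigid pattern in hand, excluding the pathological graphs you describe --- every cycle edge tight, no deletion raising the radius, every deletion sequence terminating in an even cycle --- is the entire difficulty, and your proposal leaves it untouched.
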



\begin{thebibliography}{99}
    
\bibitem[Balister et al.(2007)]{BBG}P. Balister, B. Bollob\'{a}s, S. Gerke, The Generalised Randi\'{c} index of trees, \textit{Journal of Graph Theory}, {\bf 56}(2007),  270-286.

\bibitem[Bollob\'{a}s and Erd\"{o}s (1998)]{B.E} B. Bollob\'{a}s, P. Erd\"{o}s, Graphs of extremal weights, \textit{Ars Combin}, {\bf 50}(1998), 225-233.

\bibitem[Caporossi and Hansen(2000)]{ch} G. Caporossi, P. Hansen, Variable neighborhood search for extremal graphs 1: The Autographix system, \textit{Discrete Math}. {\bf 212} (2000) 29-44.

\bibitem[Delorme et al.(2002)]{DFR}C. Delorme, O. Favaron, D. Rautenbach, On the Randi\'{c} index, \textit{Discrete Math}. {\bf 257} (2002), 29-38.

\bibitem[Deng et al.(preprint 1)]{d} H. Deng, S. Balachandran, S. K. Ayyaswamy, Y. B. Venkatakrishnan, On the harmonic index and the chromatic number of a graph, preprint.

\bibitem[Deng et al.(preprint 2)]{dtw} H. Deng, Z. Tang, R. Wu, A lower bound for the harmonic index of a graph with minimum degree at least
two, preprint.

\bibitem[Fajtlowicz(1988)]{fa} S. Fajtlowicz, On conjectures of Graffiti, \textit{Discrete Math}. {\bf 72} (1988) 113-118.

\bibitem[Fajtlowicz(1987)]{b5} S. Fajtlowicz, On conjectures of Graffiti-II, \textit{Congr. Numer.} {\bf 60} (1987) 187-197.

\bibitem[Favaron et al.(1993)]{b6} O. Favaron, M. Mahio, J. F. Sacl\'e, Some eigenvalue properties in graphs (Conjectures of Graffiti-II),
\textit{Discrete Math}. {\bf 111} (1993) 197-220.

\bibitem[Gutman and Furtula(2008)]{GutmanFurtula}I. Gutman, B. Furtula (Eds.), Recent Results in the Theory of Randi\'{c} Index, \textit{Mathematical Chemistry Monograph} No.6, Kragujevac, 2008.

\bibitem[Hansen and Vukicevi\'c(2009)]{b7} P. Hansen, D. Vukicevi\'c, Variable neighborhood search for extremal graphs. 23.
On the Randi\'c index and the chromatic number, \textit{Discrete Math}. {\bf 309}
(2009) 4228-4234.

\bibitem[Li and Gutman(2006)]{LiGutman}X. Li, I. Gutman, Mathematical Aspects of Randi\'{c}-type Molecular Structure Descriptors, \textit{Mathematical Chemistry Monographs} No.1, Kragujevac, 2006.

\bibitem[Liu and Gutman(2009)]{lg} B. Liu, I. Gutman, On a conjecture in Randi\'{c} indices, \textit{MATCH Commun. Math. Comput. Chem}. {\bf 62} (2009) 143-154.

\bibitem[Randi\'{c}(1975)]{Randic}M. Randi\'{c}, Characterization of molecular branching,  \textit{J. Am. Chem. Soc}., 1975, {\bf 97}(23), 6609-6615.

\bibitem[You and Liu(2009)]{yl} Z. You, B. Liu, On a conjecture of the Randi\'{c} index, \textit{Discrete Appl. Math.} {\bf 157} (2009) 1766-1772.

\bibitem[Zhong(2012)]{b12} L. Zhong, The harmonic index for graphs, \textit{Appl. Math. Lett.} {\bf 25} (2012) 561-566.
    
\end{thebibliography}
\end{document}